\documentclass[11pt,reqno,tbtags]{amsart}

%%%%%%%%%%%%%%%%%%
% PACKAGES
%%%%%%%%%%%%%%%%%%

\usepackage{amsthm,amssymb,amsfonts,amsmath,mathrsfs}
\usepackage[numbers, sort&compress]{natbib}
\usepackage{caption,graphicx}
\usepackage[labelformat=simple]{subcaption}
\usepackage[rmargin=40mm,lmargin=40mm,bmargin=30mm,tmargin=30mm]{geometry}
\usepackage{setspace}
\usepackage{paralist}
\usepackage[usenames,dvipsnames,svgnames,table]{xcolor}
\usepackage{hyperref}
\hypersetup{
	unicode=true,
	colorlinks=true,
	filecolor={black},
	linkcolor={black},
	citecolor={black},
	urlcolor={blue!60!black},
	pdftitle={A logarithmic bound for the chromatic number of the associahedron},
	pdfauthor={Addario-Berry, Reed, Scott, Wood},	
	bookmarks=false,        
	unicode=false,          
	pdftoolbar=true,        
	pdfmenubar=true,        
	pdffitwindow=true,      
	pdfnewwindow=true,      
}
\usepackage{cleveref}

% Theorem types, theorem references

\newtheorem{thm}{Theorem}

\newtheorem{prop}[thm]{Proposition}

%\usepackage{lineno}
%\linenumbers

\providecommand{\N}{}
\renewcommand{\N}{{\mathbb N}}

%Sets
\newcommand\cA{\mathcal A}

\newcommand{\defn}[1]{\textcolor{Maroon}{\emph{#1}}}

%%%%%%%%%%%%%%%%%%
\renewcommand{\geq}{\geqslant}
\renewcommand{\ge}{\geqslant}
\renewcommand{\leq}{\leqslant}
\renewcommand{\le}{\leqslant}

%%%%%%%%%%%%%%%%%
%\newcommand{\comment}[1]{{\small\textcolor{red}{[#1]}}}
%\newcommand{\commenta}[1]{{\small\textcolor{olive}{[ADS: #1]}}}
%\newcommand{\retort}[1]{{\small \textcolor{brown}{[#1]}}}
%\newcommand{\commento}[2]{{\tiny{\textcolor{violet}{[#1]} \textcolor{teal}{[#2]}}}}
%\newcommand{\commentl}[1]{{\small\textcolor{violet}{[#1]}}}
%\newcommand{\rrr}[1]{{\small\noindent\textcolor{blue}{[ref \#1: #1]}}}

%%%%%%%%%%%%%%%%%%

\newcommand{\doi}[1]{\url{https://dx.doi.org/#1}}
%%%%%%%%%%%%%%%%%%

%\renewcommand{\thesubfigure}{\Alph{subfigure}}\

\begin{document}

\title[A logarithmic bound for the chromatic number of the associahedron]{A logarithmic bound for the chromatic number\\ of the associahedron} 
%%%%%
\author{Louigi Addario-Berry}
\address[L. Addario-Berry]{Department of Mathematics and Statistics, McGill University, Montr\'eal, Qu\'ebec, Canada}
\email{louigi@gmail.com}
\thanks{Louigi Addario-Berry is supported by an NSERC Discovery Grant and by the Canada Research Chairs program}
%%%%%
\author{Bruce Reed}
\address[B. Reed]{Mathematical Institute, Academia Sinica, Taiwan}
\email{bruce.al.reed@gmail.com}
%%%%%
\author{Alex Scott}
\address[A. Scott]{Mathematical Institute, University of Oxford, Oxford OX2 6GG, United Kingdom}
\email{scott@maths.ox.ac.uk}
\thanks{Alex Scott is supported by EPSRC grant EP/X013642/1.}
%%%%%
\author{David R. Wood}
\address[D. R. Wood]{School of Mathematics, Monash University, Melbourne, Australia}
\email{david.wood@monash.edu}
\thanks{David Wood is supported by the Australian Research Council}
%%%%%

\date{November 21, 2018; revised \today}
%\urladdrx{http://www.problab.ca/louigi/}

%\keywords{<keywords>}
\subjclass{52B05, 05C15}

%{60C05 (68P10,68W40)} %%{Primary: <subject>; Secondary: <subject>}
\begin{abstract} 
We show that the chromatic number of the $n$-dimensional associahedron grows at most logarithmically with $n$, improving a bound from and proving a conjecture of Fabila-Monroy~et~al.~(2009).
%[\emph{Discrete Math.\ Theor.\ Comput.\ Sci.}, 2009].
\end{abstract}

\maketitle

%%%%%%%%%%%%%%%%%%%%%%%%%%%%
% INTRODUCTION
%%%%%%%%%%%%%%%%%%%%%%%%%%%%
\section{Introduction} 
\label{sec:intro} 

The associahedron $\cA_n$ is an $(n-3)$-dimensional convex polytope that arises in numerous branches of mathematics, including algebraic combinatorics~\cite{Stasheff63,LR98,CFZ02,HLT11} and discrete geometry~\cite{BFS90,PS12, PS15}. Associahedra are also called Stasheff polytopes after the work of Stasheff~\cite{Stasheff63}, following earlier work by Tamari~\citep{Tamari51}. We are only interested in the 1-skeleton of the associahedron, so we consider it as a graph, defined as follows. 

The elements of the associahedron $\cA_n$ are triangulations $T$ of the convex $n$-gon with vertices labeled by $\{0,\ldots,n-1\}$ in clockwise order. For any such triangulation $T$, we always denote triangles of $T$ by the sequence $ABC$ of their vertices, ordered so that $A < B < C$.  We write 
$E(T)$ for the  set of edges contained in $T$.
Every triangulation $T$ of $\cA_n$ contains the edges $01,12,\ldots, (n-1)0$; we refer to these as \defn{boundary edges}. For $T$ in $\cA_n$, each non-boundary edge $e \in E(T)$ is contained in a unique quadrilateral $Q=Q_T(e)=ABCD$ with $A < B < C < D$; we always list the vertices of quadrilaterals in increasing order. \defn{Flipping} the edge $e$ means replacing $e$ by the other diagonal of $Q$; see Figure~\ref{fig:basic_quads}. Two triangulations $T,T'$ in $\cA_n$ are adjacent in $\cA_n$ if they may be obtained from one another by a single flip. 

%Figure~\ref{fig:basic_quads}(\subref{fig:tri_quad}) test \ref{fig:tri_quad} test 

\begin{figure}[!ht]
\begin{subfigure}[b]{0.45\textwidth}
\begin{centering}
\includegraphics[width=0.65\textwidth,page=1]{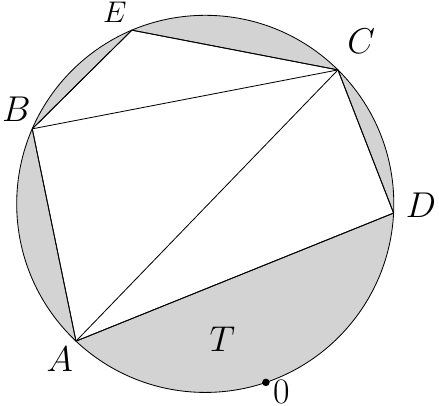}
\caption{A portion of a triangulation $T$. }
\label{fig:tri_quad}
\end{centering}
\end{subfigure}%
\quad
\begin{subfigure}[b]{0.45\textwidth}
\begin{centering}
\includegraphics[width=0.65\textwidth,page=2]{edge_flip.pdf}
\caption{A portion of the triangulation $T'$ formed from $T$ by flipping $AC$. }
\label{fig:tri_quad_flip}
\end{centering}
\end{subfigure}%
\caption{Portions of two adjacent triangulations of an $n$-gon.} 
\label{fig:basic_quads}
\end{figure}

Graph-theoretic properties of associahedra have been well-studied. For example, it is easily seen that $\cA_n$ is $(n-3)$-regular. Lucas~\cite{Lucas87} and  Hurtado and Noy~\citep{HN99} both proved that $\cA_n$ is Hamiltonian. Hurtado and Noy~\citep{HN99} also showed that $\cA_n$ has connectivity $n-3$, as well as determining its automorphism group. Parlier and Petri proved bounds on the genus of $\cA_n$. The diameter of $\cA_n$ and several related questions have been studied extensively~\cite{STT88,CP16,Pournin14,AMP15,CM18,Pournin19,Pournin17,CLP18,BLPV18,MP14,Dehornoy10}. Sleator~et~al.~\cite{STT88} proved that the diameter equals $2n-10$ for sufficiently large $n$, and recently Pournin~\cite{Pournin14} showed that $2n-10$ is the answer for $n>12$. Several authors \cite{MRS99,EF23,MT97} studied random walks in $\cA_n$.  

This paper studies the chromatic number of $\cA_n$, a quantity which was first considered by Fabila-Monroy~et~al.~\cite{FFHHUW09}. That work gave an explicit $\lceil{\frac{n}{2}\rceil}$-colouring of $\cA_n$, and observed that $\chi(\cA_n) \in O(n/\log n)$, based on the result of Johansson~\cite{johansson94} which says that every triangle-free graph with maximum degree $\Delta$ is $O(\Delta/\log\Delta)$-colourable. No non-constant lower bound on $\chi(\cA_n)$ is known. Indeed, the best known lower bound is $\chi(\cA_{10}) \geq 4$ [private communication, Ruy Fabila-Monroy]. Fabila-Monroy~et~al.~\cite{FFHHUW09} conjectured a $O(\log n)$ upper bound. We prove this conjecture.

\begin{thm}\label{thm:main}
$\chi(\cA_n) \in O(\log n)$. 
\end{thm}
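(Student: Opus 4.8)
The plan is to build an explicit proper colouring of $\cA_n$ with $O(\log n)$ colours, combining the recursive (Catalan) structure of triangulations with a shift-graph phenomenon that I expect to be the true source of the logarithm; \refT{thm:main} follows at once. The natural starting point is the decomposition by a fixed boundary edge. Fix the edge $0(n-1)$ and let $a=a(T)$ be the apex of the triangle of $T$ incident with it. Fixing $a$ cuts the $n$-gon into the sub-polygons on $\{0,\dots,a\}$ and $\{a,\dots,n-1\}$, and flips not incident with the apex triangle act independently on the two sides; hence the triangulations with apex $a$ induce exactly the Cartesian product $\cA_{a+1}\mathbin{\square}\cA_{n-a}$ inside $\cA_n$. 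Since $\chi(G\mathbin{\square}H)=\max(\chi(G),\chi(H))$, each such slice is cheap to colour recursively, so all the difficulty is concentrated in the flips that move between slices.

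The basic local tool is a parity invariant. A single flip of the quadrilateral $ABCD$ replaces $AC$ by $BD$, hence changes $\deg_T(v)$ by $\pm1$ at each of $v=A,B,C,D$ and fixes all other degrees. Thus $\deg_T(v)\bmod 2$ flips precisely when $v$ lies in the flipped quadrilateral; in particular $\deg_T(0)\bmod 2$ separates the ends of \emph{every} flip incident with vertex $0$, and the weighted variant $\psi(T)=\sum_i 2^i\deg_T(i)$ strictly changes across every flip, with the $2$-adic valuation of its increment equal to the smallest label in the quadrilateral. Used one vertex at a time, however, these invariants only yield a product colouring whose size doubles at each of up to $n$ peeling steps; the whole point is to avoid this blow-up.

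The feature I would try to exploit for the logarithm is a shift-graph structure on the fan at vertex $0$. Writing the neighbours of $0$ as $1=u_0<u_1<\dots<u_k=n-1$, the two apex-changing flips send the ordered top pair $(u_{k-1},u_{k-2})$ either to $(u_{k-2},u_{k-3})$ (deleting $u_{k-1}$) or to $(c,u_{k-1})$ with $c>u_{k-1}$ (inserting $c$); in both cases the new pair shares exactly one coordinate with the old one in the chain pattern, i.e.\ this is an edge of the shift graph on $\{1,\dots,n-1\}$, whose chromatic number is $\lceil\log_2 n\rceil$. Colouring such a pair by the most significant bit in which its two entries differ then separates these flips with only $O(\log n)$ colours, and I would aim to iterate this most-significant-bit idea across the $O(\log n)$ dyadic scales organised by the valuation of $\psi$ above, in such a way that the different scales share one common palette rather than multiply.

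The main obstacle is exactly this last point: combining the scale-wise shift colourings \emph{additively} rather than as a product. A naive combination multiplies the number of colours by a constant at each scale, or at each peeled vertex, giving an $O(n)$ bound at best, whereas a genuine $O(\log n)$ bound needs the crossing flips at every scale to reuse a single logarithmic palette. This is in tension with the apex decomposition, which respects sub-polygons but is badly unbalanced (the apex can be an endpoint, leaving one side of size $n-1$); reconciling the balanced, logarithmic-depth recursion demanded by the colour count with the unbalanced but sub-polygon-respecting flip structure is where I expect the real work of the proof to lie.
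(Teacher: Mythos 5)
Your proposal is not a proof: it is a strategy with a gap that you yourself identify, and that gap is precisely where all the work lies. Your local observations are correct --- the slice of $\cA_n$ with apex $a$ on the edge $0(n-1)$ is indeed $\cA_{a+1}\mathbin{\square}\cA_{n-a}$, degrees change by $\pm 1$ exactly on the flipped quadrilateral, and the two apex-changing flips at vertex $0$ do trace edges of a shift graph, which the most-significant-differing-bit colouring handles with $\lceil\log_2 n\rceil$ colours. But these observations cover only the flips at the top of the fan at vertex $0$; for all remaining flips you offer only the intention to ``iterate across scales'' and to peel vertices, and, as you note, every such iteration multiplies palettes, giving a bound that is polynomial or worse in $n$ rather than $O(\log n)$. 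Nothing in the proposal supplies the mechanism that makes the combination additive; ``I would aim to'' and ``where I expect the real work of the proof to lie'' is the whole of the argument at its crucial point.

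For comparison, the paper resolves this in a way that uses no recursion on sub-polygons and no shift graphs. It fixes a scale function $\sigma_e=\lceil\log_r|U-V|\rceil$ on edges and tracks a constant number of \emph{global} invariants of $T$: the vector counting triangles by the pattern of equal scales among their three edges (these vectors move by bounded steps under a flip, so a bounded-degree auxiliary graph on $\N^4$ gives an $O(1)$ palette); a weighted sum $c(T)$ of scales over type-1 and type-2 edges taken mod $3\lceil\log_r n\rceil$ (the only logarithmic palette); two parities counting left and right turns in the scale-$i$ dual forests; and a colour-sum $I(T)$ taken mod $\chi(\cA_5)$ over reduced dual trees, each of which lives in a constant-size associahedron $\cA_{2r-1}=\cA_5$ by a pigeonhole argument on scales. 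A case analysis shows that every flip changes at least one of these invariants, whence $\chi(\cA_n)\le O(1)\cdot O(\log n)\cdot 2\cdot 2\cdot O(1)=O(\log n)$. The point you would need to absorb is that the ``additive combination'' you are missing is achieved by making all but one invariant take $O(1)$ values --- in particular, induction enters only through the constant-size $\cA_5$, not through a logarithmic-depth decomposition of the $n$-gon. Your apex/shift-graph framework has no analogue of this, and without importing an idea of that kind it cannot be completed.
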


\section{The Proof}

We prove Theorem~\ref{thm:main} by tracking how several carefully chosen properties of triangulations change when an edge is flipped. To see how this yields a route to bounding the chromatic number of $\cA_n$, first recall that if $f$ is a graph homomorphism from $\cA_n$ to some graph $G$, which is to say that $f:V(\cA_n) \to V(G)$ is adjacency-preserving, then $\chi(\cA_n) \le \chi(G)$. This fact may be generalized as follows. Suppose that $(G_i)_{i \in I}$ is a finite set of graphs and $(f_i: V(\cA_n) \to V(G_i))_{i \in I}$ are functions such that for all adjacent triangulations $T,T'$ in $\cA_n$, there exists $i \in I$ for which $f_i(T)$ and $f_i(T')$ are adjacent in $G_i$. For each $i \in I$, let $\kappa_i$ be a proper colouring of $G_i$ with $\chi(G_i)$ colours, and colour each $T$ in $\cA_n$ with the vector $(\kappa_i(f_i(T)))_{i \in I}$. If $T$ and $T'$ are adjacent in $\cA_n$, then $(\kappa_i(f_i(T)))_{i \in I}$ and $(\kappa_i(f_i(T')))_{i \in I}$ differ in at least one coordinate. Thus this is a proper colouring of $\cA_n$, and $\chi(\cA_n) \le \prod_{i \in I} \chi(G_i)$. The remainder of the paper is devoted to defining the five functions that we use (see \eqref{TheColouring}), and showing they have the requisite properties. 

Two fundamental notions that we use are the \defn{type} of a quadrilateral and the \defn{scale} of an edge.
For a quadrilateral $Q=Q_T(e)=ABCD$ contained within triangulation $T$, 
we say $Q$ is \defn{type-1} if $e=AC$, and otherwise say $Q$ is \defn{type-2}; 
we say that an edge $e$ is \defn{type-1} or \defn{type-2} according to the type of the quadrilateral  $Q_T(e)$.  
For example,  in Figure~\ref{fig:tri_quad},
$Q_T=ABCD$ is typ]e-1 and $Q_T(BC)=ABEC$ is type-2,
and in Figure~\ref{fig:tri_quad_flip}, 
$Q_{T'}(BC)=BECD$ is type-1 and $Q_{T'}(BD)=ABCD$ is type-2.

Fix an integer $\alpha \geq 3$ to be chosen later (in fact we end up taking $\alpha=3$). For an edge $e=UV$, define the \defn{scale} of $e$ to be 
\[
\sigma_e := \lceil \log_\alpha |U-V|\rceil \in \{0,1,\ldots,\lceil \log_\alpha(n-1)\rceil\}.\] 
Note that $\sigma_e=0$ if and only if $e$ is a boundary edge. 
The scales of the edges incident to triangles within a fixed quadrilateral $Q$ are a key input to the functions we define. 

%\sss{p. 2, l. 42: I am slightly puzzled that in a triangle ABC, the edges AB, BC and AC
%are called left $\ell$, middle $m$ and right $r$. I would have called AB $\leftrightarrow \ell$, AC $\leftrightarrow m$ and BC $\leftrightarrow r$. Everything would have then be more symmetric.}

We first consider the effect of edge flips on triangles $ABC$, where two of the three incident edges have the same scale. 
If $AB$ (resp.~$BC$, $AC$) is the unique edge whose scale is different from the others, then we say $ABC$ is a \defn{type-$\ell$} (resp.\ \defn{type-$m$}, \defn{type-$r$}) triangle. If all three edges have the same scale, then we say $ABC$ is a \defn{type-$z$} triangle. Let $(\ell_T,m_T,r_T,z_T)$ be the vector counting the number of type-$\ell$, type-$m$, type-$r$ and type-$z$ triangles in $T$. 

%\comment{DW: Do we ignore triangles where the three edges have three different scales?} \comment{DW: yes}

For the remainder of the paper, we fix a triangulation $T$, and consider the effect of flipping an edge $e=AC$ within a type-1 quadrilateral $Q_T(e)=ABCD$, to form another triangulation $T'$. 

\begin{prop}\label{prop:2_same}
If  $\sigma_{AC}$,  $\sigma_{BD}$ and $\sigma_{BC}$ are all different, then $(\ell_{T'},m_{T'},r_{T'},z_{T'})\ne (\ell_T,m_T,r_T,z_T)$. 
\end{prop}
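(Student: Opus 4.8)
The plan is to prove the stronger statement $m_{T'}=m_T-1$; in particular the vectors differ. Everything localizes to the quadrilateral $Q_T(e)=ABCD$: flipping $AC$ replaces the triangles $ABC,ACD$ of $T$ by $ABD,BCD$ in $T'$ and leaves every other triangle, together with all of its edge-scales, unchanged. Hence $(l_{T'},m_{T'},r_{T'},z_{T'})-(l_T,m_T,r_T,z_T)$ is exactly the type-contribution of $\{ABD,BCD\}$ minus that of $\{ABC,ACD\}$, so it suffices to understand how these four triangles are typed. Recall that for a triangle $XYZ$ with $X<Y<Z$ the edge $XZ$ is always the longest, so $XYZ$ is type-$m$ precisely when $\sigma_{XY}=\sigma_{XZ}>\sigma_{YZ}$.

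The single place the hypothesis $r\ge 3$ enters is the bound that, for such a triangle, $\sigma_{XZ}\le\max(\sigma_{XY},\sigma_{YZ})+1$: indeed $|X-Z|=|X-Y|+|Y-Z|\le 2\max(|X-Y|,|Y-Z|)\le r\max(|X-Y|,|Y-Z|)$, and applying $\lceil\log_r\cdot\rceil$ gives the claim. Together with the matching lower bounds coming from monotonicity of $\sigma$, this yields $\max(\sigma_{AB},\sigma_{BC})\le\sigma_{AC}\le\max(\sigma_{AB},\sigma_{BC})+1$, the analogous inequalities for $\sigma_{BD}$ in terms of $\sigma_{BC},\sigma_{CD}$, and $\max(\sigma_{AC},\sigma_{BD})\le\sigma_{AD}\le\min\{\max(\sigma_{AC},\sigma_{CD}),\max(\sigma_{AB},\sigma_{BD})\}+1$.

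By the polygon reflection $i\mapsto(n-1)-i$, which preserves all edge-scales, exchanges $\sigma_{AC}\leftrightarrow\sigma_{BD}$ and $\sigma_{AB}\leftrightarrow\sigma_{CD}$, and swaps type-$l$ with type-$m$, it suffices to treat the case $\sigma_{AC}>\sigma_{BD}$ (the case $\sigma_{AC}<\sigma_{BD}$ then gives the analogous conclusion for type-$l$). Since $\sigma_{AC},\sigma_{BD}\ge\sigma_{BC}$ are distinct, both strictly exceed $\sigma_{BC}$, and chasing the inequalities above forces a rigid picture: $\sigma_{AB}>\sigma_{BC}$, $\sigma_{AB}\le\sigma_{AC}\le\sigma_{AB}+1$, $\sigma_{CD}\le\sigma_{BD}\le\sigma_{AB}$, and $\sigma_{AD}\in\{\sigma_{AC},\sigma_{AC}+1\}$. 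I then split on $\sigma_{AD}$. If $\sigma_{AD}=\sigma_{AC}+1$, then $\sigma_{AD}\le\max(\sigma_{AB},\sigma_{BD})+1=\sigma_{AB}+1$ forces $\sigma_{AB}=\sigma_{AC}$, and one checks directly that $ABC$ is type-$m$ while $ACD,ABD,BCD$ are not, so the flip destroys this one type-$m$ triangle. If $\sigma_{AD}=\sigma_{AC}$, then $ACD$ is type-$m$ (its edges $AC,AD$ share the top scale, beating $\sigma_{CD}$), $BCD$ is never type-$m$ (as $\sigma_{BC}<\sigma_{BD}$), and $ABC,ABD$ are type-$m$ together precisely when $\sigma_{AB}=\sigma_{AC}$, so their contributions cancel; again exactly one type-$m$ triangle, namely $ACD$, is lost. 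In both cases $m_{T'}=m_T-1$.

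The heart of the argument is the rigidity step: extracting from $r\ge 3$ the fact that, once $\sigma_{AC}>\sigma_{BD}$, all six edge-scales around $ABCD$ are pinned down up to the single choice of $\sigma_{AD}$. This is what eliminates the two kinds of configuration that could otherwise balance the type-$m$ count, namely triangles with all three scales distinct (which are uncounted) and type-$z$ triangles; for instance the rigidity gives $\sigma_{CD}<\sigma_{AC}\le\sigma_{AD}$, so $ACD$ is never type-$z$. I expect the bookkeeping of these degenerate possibilities, rather than any single inequality, to be the main obstacle; once the rigidity is in hand the concluding case analysis is a short finite check.
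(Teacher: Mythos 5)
Your proof is correct, and it takes a genuinely different route from the paper's; indeed it proves more. The paper argues by contradiction: assuming the type vector is unchanged by the flip, it uses the quadrilateral bound $\sigma_{AD}\le 1+\max(\sigma_{AB},\sigma_{BC},\sigma_{CD})$ --- the only place its hypothesis $r\ge 3$ enters, via $D-A\le 3\max(B-A,C-B,D-C)$ --- and then splits on which of $\sigma_{AB},\sigma_{BC},\sigma_{CD}$ is largest, in each case exhibiting a triangle type that $\{ABD,BCD\}$ would have to contain but cannot; it invokes the same polygon reflection you use, but only to reduce the $\sigma_{CD}$-maximal case to the $\sigma_{AB}$-maximal one. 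You instead reflect at the outset to assume $\sigma_{AC}>\sigma_{BD}$, establish the rigidity $\sigma_{BC}<\sigma_{BD}\le\sigma_{AB}\le\sigma_{AC}\le\sigma_{AB}+1$, $\sigma_{CD}\le\sigma_{BD}$, $\sigma_{AD}\in\{\sigma_{AC},\sigma_{AC}+1\}$, and then compute the exact change $m_{T'}=m_T-1$ by a two-case check on $\sigma_{AD}$. I verified the rigidity chain (e.g.\ $\sigma_{AB}>\sigma_{BC}$ holds since otherwise $\sigma_{AC}\le\sigma_{BC}+1$, contradicting $\sigma_{AC}\ge\sigma_{BD}+1\ge\sigma_{BC}+2$) and both terminal cases, including the cancellation of $ABC$ against $ABD$ when $\sigma_{AB}=\sigma_{AC}$; all are sound. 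One small gloss: in the mirrored case $\sigma_{AC}<\sigma_{BD}$ the reflection also swaps the roles of $T$ and $T'$, so the conclusion is $l_{T'}=l_T+1$ rather than $-1$, but distinctness is all that is needed. Your route buys a quantitative conclusion and a weaker hypothesis --- your only use of $r$ is the per-triangle bound $\sigma_{XZ}\le\max(\sigma_{XY},\sigma_{YZ})+1$, valid for every $r\ge 2$ --- whereas the paper's single three-term inequality constrains the configuration in one stroke, giving a shorter case analysis at the price of requiring $r\ge 3$ and yielding only the qualitative statement.
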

 
\begin{proof} 
By assumption,  $\sigma_{BD}\ne \sigma_{AC}$ and $\sigma_{BC} < \min(\sigma_{AC},\sigma_{BD})$. 
We argue by contradiction. To this end, suppose that $(\ell_{T'},m_{T'},r_{T'},z_{T'})=(\ell_T,m_T,r_T,z_T)$. 
Since $\alpha\geq 3$, 
\[
\log_\alpha (D-A) \le \log_\alpha (3\max(B-A,C-B,D-C)) \le 1+ \log_\alpha \max(B-A,C-B,D-C). 
\]
Taking ceilings, it follows that 
\begin{equation}\label{eq:tight}
\sigma_{AD} \le 1+ \max(\sigma_{AB},\sigma_{BC},\sigma_{CD}).
\end{equation}
The preceding equation requires one of three inequalities to hold; the next three paragraphs treat the possibilities one at a time. 

Suppose that $\sigma_{AB} = \max(\sigma_{AB},\sigma_{BC},\sigma_{CD})$. Using \eqref{eq:tight} and the fact that $\sigma_{AB}\le \sigma_{AC}\le\sigma_{AD}$, we find that either $\sigma_{AC}=\sigma_{AB}$ or 
$\sigma_{AC}=\sigma_{AB}+1=\sigma_{AD}$. 

%\comment{DW: How do we write Figure~2(A) (as in the caption) instead of Figure~\ref{fig:tri_type_lz}? I actually prefer to write Figure~2(a), but what we write in the text should match the caption.} \comment{LAB: done.}

If  $\sigma_{AC}=\sigma_{AB}$, as in Figure~\ref{fig:tri_type_lz}, then $ABC$ is a type-$m$ triangle so, since we assume the triangle type vector is unchanged by flipping edge $AC$, either $ABD$ or $BCD$ is also type-$m$. 
$BCD$ is not type-$m$, as $\sigma_{BC}<\sigma_{BD}$ by assumption, so $ABD$ is type-$m$ and hence $\sigma_{AB}=\sigma_{AD}$.  But then $ABC$ and $ACD$ are both type-$m$, which gives a contradiction as $BCD$ is not.  

If $\sigma_{AC}=\sigma_{AB}+1$, as in Figure~\ref{fig:tri_type_lm}, then $ACD$ is type-$m$ or type-$z$, so either $ABD$ or $BCD$ is type-$m$ or type-$z$.  But $BCD$ is neither, as $\sigma_{BC} < \sigma_{BD}$, and $ABD$ is neither as $\sigma_{AB}\ne\sigma_{AD}$.

Next suppose that $\sigma_{BC}=\max(\sigma_{AB},\sigma_{BC},\sigma_{CD})$, as in Figure~\ref{fig:tri_type_mz}. 
Since $\sigma_{BC}\ne \sigma_{AC}$ and $\sigma_{BC}\ne \sigma_{BD}$ by assumption, 
and all scales are at most $\sigma_{AD}$, it must be that 
$\sigma_{AC}=\sigma_{BD}=\sigma_{AD}$; but this is ruled out by assumption. 

Finally, suppose that $\sigma_{CD} = \max(\sigma_{AB},\sigma_{BC},\sigma_{CD})$. 
This case is the same as the first case, as we can apply the argument to a reversed copy of the associahedron (which exchanges type-$\ell$ and type-$m$ triangles, while leaving the other two types invariant).
\end{proof}
\begin{figure}[ht]
\begin{subfigure}[b]{0.3\textwidth}
\begin{centering}
		\includegraphics[width=0.9\textwidth,page=3]{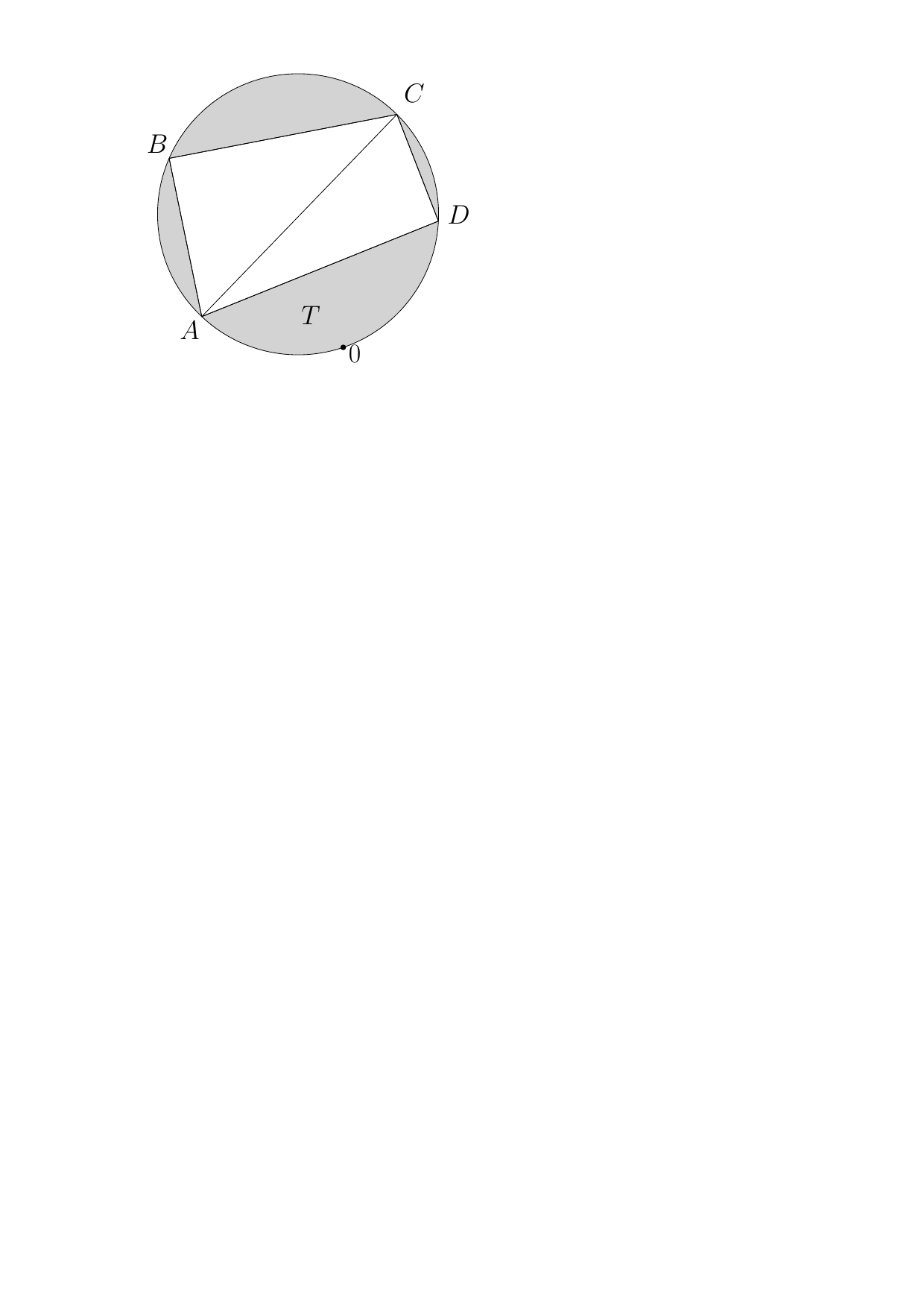}
                \caption{$\sigma_{AB}=\sigma=\sigma_{AC}$.}
                \label{fig:tri_type_lz}
\end{centering}
\end{subfigure}%
\quad
\begin{subfigure}[b]{0.3\textwidth}
\begin{centering}
		\includegraphics[width=0.9\textwidth,page=4]{edge_flip2.pdf}
                \caption{$\sigma_{AB}=\sigma,\sigma_{AC}=\sigma+1$.}
                \label{fig:tri_type_lm}
\end{centering}
\end{subfigure}%
\quad
\begin{subfigure}[b]{0.3\textwidth}
\begin{centering}
		\includegraphics[width=0.9\textwidth,page=5]{edge_flip2.pdf}
                \caption{$\sigma_{BC}=\sigma$.}
                \label{fig:tri_type_mz}
\end{centering}
\end{subfigure}%
\caption{Writing $\sigma=\max(\sigma_{AB},\sigma_{BC},\sigma_{CD})$, the subfigures correspond to possible configurations arising in the proof of Proposition~\ref{prop:2_same}.}
\label{fig:constrained-types}
\end{figure}

%\rrr{Fig.~2: it is unclear why the figure shows the point E (what is this point?) and the triangle BEC. There is no mention of this point and this triangle in Proposition 2. If there is a reason for this, please mention it in the text. Otherwise, remove this from the figure.} \comment{DW: I agree. Louigi, please remove E and BEC from Figure 2.}

\begin{prop}\label{prop:three_equal}
If $\sigma_{AC}=\sigma_{BD}=\sigma_{BC}$ and $(\ell_{T'},m_{T'},r_{T'},z_{T'})=(\ell_T,m_T,r_T,z_T)$, 
then either $\sigma_{AD}=\sigma_{BC}$ or $\sigma_{AB}=\sigma_{BC}=\sigma_{CD}$. 
\end{prop}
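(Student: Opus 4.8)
The plan is to argue by contraposition, in the same spirit as \refP{prop:2_same}: assuming the flip preserves the type vector, I will show that if the conclusion fails then the four triangles bordering the flip force $(l_{T'},m_{T'},r_{T'},z_{T'})\ne(l_T,m_T,r_T,z_T)$. Write $s:=\sigma_{AC}=\sigma_{BD}=\sigma_{BC}$. Since scale is monotone in edge length and $A<B<C<D$, the length comparisons $B-A\le C-A$, $D-C\le D-B$ and $C-A\le D-A$ give at once $\sigma_{AB}\le s$, $\sigma_{CD}\le s$ and $\sigma_{AD}\ge s$. These three monotonicity facts are the only structural input; everything after is bookkeeping.

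First I would record that flipping $AC$ alters only the triangles $ABC,ACD$ (present in $T$) and $ABD,BCD$ (present in $T'$), every other triangle being untouched. Hence the hypothesis $(l_{T'},m_{T'},r_{T'},z_{T'})=(l_T,m_T,r_T,z_T)$ says exactly that the multiset of types drawn from $\{l,m,r,z\}$ occurring among $\{ABC,ACD\}$ equals that occurring among $\{ABD,BCD\}$, with the caveat that a triangle all three of whose incident scales are distinct is of none of the four named types and so contributes to none of the counts and may be discarded.

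Next I would negate the conclusion: suppose $\sigma_{AD}\ne s$, hence $\sigma_{AD}>s$ by the bound above, and suppose $\{\sigma_{AB},\sigma_{CD}\}\ne\{s\}$, i.e.\ at least one of $\sigma_{AB}<s$ or $\sigma_{CD}<s$ holds. Using $\sigma_{AD}>s$ together with $\sigma_{AB}\le s$ and $\sigma_{CD}\le s$, I classify each of the four triangles across the three remaining cases ($\sigma_{AB}<s$ and $\sigma_{CD}<s$; $\sigma_{AB}<s$ and $\sigma_{CD}=s$; $\sigma_{AB}=s$ and $\sigma_{CD}<s$). For example, when both scales are strictly below $s$, the triangle $ABC$ is type-$l$ and $BCD$ is type-$m$, while $ACD$ and $ABD$ each have three distinct scales and are uncounted; thus the $T$-multiset $\{l\}$ differs from the $T'$-multiset $\{m\}$. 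The other two cases are handled the same way and again produce unequal multisets, namely $\{l,r\}$ versus $\{z\}$ in the second case and $\{z\}$ versus $\{m,r\}$ in the third. Any such inequality contradicts preservation of the type vector, which completes the contraposition.

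I expect the only genuine work to lie in the triangle classification of the third step: for each of $ABC,ACD,ABD,BCD$ one must track precisely which pairs of incident scales coincide and, crucially, remember that ``all three distinct'' is a genuine further possibility that simply drops out of the counted vector. There is no analytic subtlety beyond the three monotonicity comparisons, so the main risk is purely combinatorial miscounting; to guard against this I would lay out the analysis as a compact table assigning to each pair (triangle, case) its type in $\{l,m,r,z,\ast\}$, where $\ast$ denotes the uncounted all-distinct case, and then read off the two multisets in each case.
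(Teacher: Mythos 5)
Your proof is correct and is essentially the contrapositive of the paper's own argument: both reduce preservation of the type vector to a multiset comparison of the types of the four affected triangles $ABC$, $ACD$, $ABD$, $BCD$, classified using the monotonicity of scale ($\sigma_{AB}\le\sigma_{AC}$, $\sigma_{CD}\le\sigma_{BD}$, $\sigma_{AD}\ge\sigma_{AC}$) and the fact that all-distinct-scale triangles are uncounted. The paper runs the logic directly (e.g.\ if $ABC$ is type-$l$ then $ABD$ must be type-$l$, forcing $\sigma_{AD}=\sigma_{BD}$), whereas you negate the conclusion and exhibit mismatched multisets in each of three cases, but the combinatorial content is the same.
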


\begin{proof}
In this case $ABC$ is type-$\ell$ or type-$z$, and $BCD$ is type-$m$ or type-$z$. 
If $ABC$ is type-$\ell$ then since the triangle type vector does not change when flipping, it must be that $ABD$ is type-$\ell$, which implies that $\sigma_{AD}=\sigma_{BD}$, yielding the result in this case since $\sigma_{BD} = \sigma_{BC}$. 
Similarly, if $BCD$ is type-$m$ then it must be that $ACD$ is type-$m$, which implies that $\sigma_{AD}=\sigma_{AC}$. Otherwise, both $ABC$ and $BCD$ are type-$z$, in which case we indeed have $\sigma_{AB}=\sigma_{BC}=\sigma_{CD}$. 
\end{proof}

For each triangulation $T$ in $\cA_n$ and $k \in \{1,2\}$ and $i\in\{0,1,\dots,\lceil{\log_\alpha(n-1)\rceil}\}$, let 
\[
s^k_i(T) := \#\{e \in E(T): Q(e)\mbox{ is type-}k,\, \sigma_e=i\}\, .
\]
Assign an integer label $c(T)$ to $T$ given by
\[
c(T) \;:=\; \left(\sum_{i=0}^{\lceil{\log_\alpha(n-1)\rceil}} \!\!\!\!\! 2i s^1_i(T) \;\;\; + \sum_{i=0}^{\lceil{\log_\alpha(n-1)\rceil}} \!\!\!\!\! 3is^2_i(T) \right) \mod (3\lceil \log_\alpha n\rceil)\, . 
\] 
The utility of such a labelling rule is explained by the following fact. 
We continue to work with triangulations $T$ and $T'$ related by an edge flip within quadrilateral $ABCD$ with $AC \in E(T)$ and $BD \in E(T')$, as above. 

\begin{prop} \label{prop_two_equal}
If exactly two of $\sigma_{AC}$, $\sigma_{BD}$ and  $\sigma_{BC}$ are equal, then 
$c(T') \ne c(T)$. 
\end{prop}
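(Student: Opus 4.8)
The plan is to compute the exact effect of the flip $AC \to BD$ on the integer label $c$, and then reduce the claim to a short piece of modular arithmetic. Since $c(T)$ is, modulo $3\lceil \log_r n\rceil$, the sum over non-boundary edges $e$ of a weight equal to $2\sigma_e$ if $e$ is type-$1$ and $3\sigma_e$ if $e$ is type-$2$, and since the flip alters only the six edges lying in the quadrilateral $ABCD$ (the diagonal and the four sides), it suffices to track the weight change on each of these. First I would record the two diagonal contributions: $AC$, which is type-$1$ with scale $\sigma_{AC}$, is deleted, contributing $-2\sigma_{AC}$; while $BD$, which as the ``$2$nd--$4$th'' diagonal of $ABCD$ is type-$2$ with scale $\sigma_{BD}$, is created, contributing $+3\sigma_{BD}$.

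The core step, and the one I expect to be the main obstacle, is determining how the types of the four sides $AB,BC,CD,DA$ change. Each side keeps its scale, since it is the same edge in $T$ and $T'$, but its quadrilateral changes, because its apex inside $ABCD$ switches between the two endpoints of the flipped diagonal while its external apex is unchanged. I would locate the external apex of each side in the appropriate boundary arc of the polygon --- for $BC$ it lies strictly between $B$ and $C$, for $AB$ between $A$ and $B$, and so on --- and then sort the four vertices of each resulting quadrilateral to read off the type. Carrying this out shows that $AB$ stays type-$1$, $CD$ stays type-$2$, $DA$ keeps whichever type it had, and only $BC$ changes: it is type-$2$ in $T$ and type-$1$ in $T'$, for a net weight change of $2\sigma_{BC}-3\sigma_{BC}=-\sigma_{BC}$. (When $BC$ is a boundary edge it contributes nothing, but then $\sigma_{BC}=0$ and the formula still holds.) Combining the three contributions gives
\[
c(T') - c(T) \equiv 3\sigma_{BD} - 2\sigma_{AC} - \sigma_{BC} \pmod{3\lceil \log_r n\rceil}.
\]

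It then remains to show the right-hand side is nonzero whenever exactly two of $\sigma_{AC},\sigma_{BD},\sigma_{BC}$ coincide. I would first note that $AC$ and $BD$ are genuine diagonals, each having an apex strictly between its endpoints, so $\sigma_{AC},\sigma_{BD}\ge 1$, while every scale is at most $\lceil\log_r n\rceil$. Hence the integer $3\sigma_{BD}-2\sigma_{AC}-\sigma_{BC}$ lies strictly between $-3\lceil\log_r n\rceil$ and $3\lceil\log_r n\rceil$, so it is $\equiv 0$ modulo $3\lceil\log_r n\rceil$ if and only if it equals $0$. Finally I would verify that $3\sigma_{BD}=2\sigma_{AC}+\sigma_{BC}$ is incompatible with exactly two of the scales being equal: in each of the three cases $\sigma_{AC}=\sigma_{BD}$, $\sigma_{AC}=\sigma_{BC}$, and $\sigma_{BD}=\sigma_{BC}$, the equation forces the third scale to agree as well, contradicting that only two are equal. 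This yields $c(T')\ne c(T)$, as required.
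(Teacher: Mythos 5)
Your proof is correct and follows essentially the same route as the paper: you identify that the flip changes the type only of the diagonal (from $Q_T(AC)$ to $Q_{T'}(BD)$) and of the edge $BC$, arrive at the same congruence $c(T')-c(T)\equiv 3\sigma_{BD}-2\sigma_{AC}-\sigma_{BC}\pmod{3\lceil\log_r n\rceil}$, and then show this quantity is nonzero under the hypothesis. The only difference is cosmetic, in the endgame: the paper splits into the three cases permitted by $\sigma_{BC}\le\min(\sigma_{AC},\sigma_{BD})$ and checks each one, while you bound the expression strictly between $-3\lceil\log_r n\rceil$ and $3\lceil\log_r n\rceil$ (using $\sigma_{AC},\sigma_{BD}\ge 1$) and observe that its vanishing would force all three scales to coincide --- an equivalent, slightly more uniform finish that also unifies the boundary case of $BC$ via $\sigma_{BC}=0$.
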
 

\begin{proof}
First suppose that $BC$ is not a boundary edge, and 
let $V$ be the unique vertex of $T$ with $B<V<C$ adjacent to both $B$ and $C$. 
Note that $ABCD=Q_T(AC)$ is type-1 in $T$ and $ABCD=Q_{T'}(BD)$ is type-2 in $T'$. 
Also, $Q_T(BC) = ABVC$ is type-2 in $T$ and $Q_{T'}(BC)=BVCD$ is type-1 in $T'$. 
It is not hard to check that no other quadrilaterals change type when moving from $T$ to $T'$. Thus 
\begin{align*} 
c(T')-c(T) 
& = 3\sigma_{BD}-2\sigma_{AC} +2\sigma_{BC}-3\sigma_{BC} & \mod (3\lceil \log_\alpha n\rceil)\, \ \\
& = 3\sigma_{BD}-2\sigma_{AC} - \sigma_{BC} & \mod (3\lceil \log_\alpha n\rceil)\, . 
\end{align*}

It follows that if $\sigma_{BC} < \sigma_{BD}=\sigma_{AC}$, then 
\[
c(T')-c(T) = \sigma_{BD} - \sigma_{BC}  \mod (3\lceil \log_\alpha n\rceil) \ne 0\, ;
\]
the difference is non-zero modulo $(3\lceil \log_\alpha n\rceil)$ since all scales are at most $\lceil \log_\alpha n\rceil$. Similarly, if 
$\sigma_{BC} = \sigma_{BD}<\sigma_{AC}$ then 
\[
c(T')-c(T) = 2(\sigma_{AC}-\sigma_{BD}) \mod (3\lceil \log_\alpha n\rceil) \ne 0\, .
\]
Finally, if 
$\sigma_{AC}=\sigma_{BC}< \sigma_{BD}$ then (since $BD$ and $AC$ are non-boundary edges)
\[
c(T')-c(T) = 3(\sigma_{BD}-\sigma_{AC}) \mod (3\lceil \log_\alpha n\rceil) \ne 0\, 
\]
Since $\sigma_{BC} \le \min(\sigma_{AC},\sigma_{BD})$, these are the only possibilities. 
Here we use that $\sigma_{BD}\geq 1$ and $\sigma_{AC}\geq 1$ (since $D-B\geq 2$ and $C-A\geq 2$).

The case when $BC$ is a boundary edge is very similar, but easier. In this case, 
\[
c(T')-c(T) = 3\sigma_{BD}-2\sigma_{AC} \mod (3\lceil \log_\alpha n\rceil)\, .
\]
Since $BC$ is a boundary edge, $AC$ and $BD$ are not, so $\sigma_{BC}=0$ and $\sigma_{AC}\ne 0$ and $\sigma_{BD} \ne 0$. It follows by assumption that $\sigma_{AC}=\sigma_{BD}$, so 
\[
c(T')-c(T) = \sigma_{AC} \mod (3\lceil \log_\alpha n\rceil) \ne 0\, .\qedhere
\]
\end{proof}

%\sss{p. 4, l.~32--39: This should appear as a corollary so that you can explicitly refer to 	it later. Otherwise, some cross-references to these cases are way too far otherwise 	(for instance in p. 6, l. 9 and l. 49).}

Propositions~\ref{prop:2_same},~\ref{prop:three_equal} and~\ref{prop_two_equal} imply that the label $c(T)$ and the type vector 
$(\ell_T,m_T,r_T,z_T)$ together distinguish $T$ from $T'$ except in the following cases. 
\begin{enumerate}[(a)]
\item $AC$, $BD$, $BC$, and $AD$ have the same scale and $AB$, $CD$ have smaller scales. 
\item $AC$, $BD$, $BC$, $AD$ and $AB$ have the same scale and $CD$ has a smaller scale. 
\item $AC$, $BD$, $BC$, $AD$ and $CD$ have the same scale and $AB$ has a smaller scale. 
\item $AC$, $BD$, $BC$, $AB$ and $CD$ have the same scale and $AD$ has a larger scale. 
\item All six edges $AB$, $AC$, $AD$, $BC$, $BD$ and $CD$ have the same scale. 
\end{enumerate}
To handle cases (a), (b) and (c) we track two additional parameters, and show that 
the parity of one or both parameters is different for $T$ and $T'$. In case (d) we again prove there is a change of parity, but of a third, more complicated parameter. For case (e) we use induction. 

\begin{figure}[b]
\begin{subfigure}[b]{0.48\textwidth}
\begin{centering}
		\includegraphics[width=0.95\textwidth,page=1]{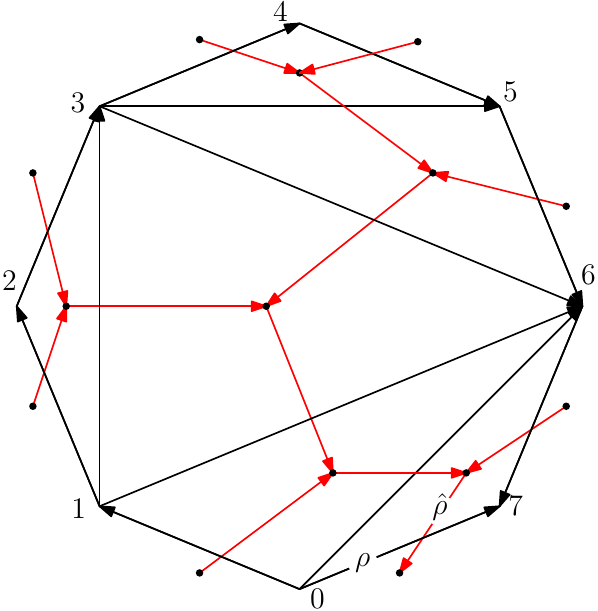}
                \caption{The dual tree of a triangulation of an 8-gon.}
                \label{fig:dual_oct}
\end{centering}
\end{subfigure}%
\quad
\begin{subfigure}[b]{0.48\textwidth}
\begin{centering}
		\includegraphics[width=0.95\textwidth,page=2]{dual_tree.pdf}
                \caption{The subgraph $\widehat{S}$ of $\widehat{T}$ corresponding to a subgraph $S$ of a triangulation $T$.}
                \label{fig:dual_subforest}
\end{centering}
\end{subfigure}%
\caption{The dual trees of an $8$-gon and of a sub-triangulation of a $12$-gon}
\label{fig:dual_trees}
\end{figure}

%\sss{p. 4, l. 44: ``Figure 3a should make the definitions of the current paragraph clear",
%but the third sentence introduces a vertex v which is not indicated on the picture.}
%
%\sss{ p. 4, l. 44--53: In other words, $\widehat{T}$ is the dual tree of $T$ oriented towards its root.
%	Note that if you had drawn the polygon with its boundary edge $(0; n-1)$ on top
%	and all its vertices below from left to right, your tree would be much nicer drawn
%	(in particular oriented from bottom to top).}

Figure~\ref{fig:dual_oct} should make the following definitions clear. Orient the edges of the triangulation $T$ so that the head of each edge has larger label. The \defn{root edge} of $T$ is the edge $\rho=(0,n-1)$. 
%Then let $\widehat{T}$ be the dual tree of $T$: for each edge $e$ of $T$ there is a unique edge $\hat{e}$ of $\widehat{T}$ crossing $e$ and oriented from the left to the right of $e$ (when following $e$ from tail to head). The tree $\widehat{T}$ is rooted at the dual edge $\hat{\rho}$, whose head is the unique node of $\widehat{T}$ with out-degree zero. More formally: Augment $T$ by adding a vertex $v$ to the unbounded face, and join it to all vertices of the polygon. Let $\widehat{T}_0$ be the planar dual of the augmented graph; then $\widehat{T}$ is formed by removing all edges of $\widehat{T}_0$ lying entirely within the unbounded face of $T$, and orienting edges as described above. We remark that the triangulation $T$ is uniquely determined by $\widehat{T}_0$ and thus by $\widehat{T}$.
Now construct the following oriented tree $\widehat{T}$. First, augment $T$ by adding a vertex $v$ to the unbounded face, and join it to all vertices of the polygon. Let $\widehat{T}_0$ be the planar dual of the augmented graph; then $\widehat{T}$ is formed from $\widehat{T}_0$ by removing all edges of $\widehat{T}_0$ lying entirely within the unbounded face of $T$. For each edge $e$ of $T$ there is a unique edge $\hat{e}$ of $\widehat{T}$ crossing $e$. Orient $\hat{e}$ from the left to the right of $e$ (when following $e$ from tail to head). Root $\widehat{T}$ at the edge $\hat{\rho}$, whose head is the unique node of $\widehat{T}$ with out-degree $0$. Note that $\widehat{T}$ is a tree, which we call the \defn{dual tree} of $T$. 

Given an edge $e=UV$ of $T$ with $e \ne \rho$, the triangle containing the head of $\hat{e}$ is incident to both $U$ and $V$; let $W$ be its third node. By the above choice of orientation for $\hat{e}$, either $W < \min(U,V)$ or $W > \max(U,V)$. In the first case, $\hat{e}$ is a \defn{left turn}, and in the second case it is a \defn{right turn}. 

%\rrr{Page 4, line 55: when you write that "$W < \min(V, U)$ or $W > \max(V, U)$", please explain why. This is due to the choice of orientations for the edges of $T$ and $\widehat{T}$, which can be understood with some thought, but just mentioning that would help.} \comment{DW: done}

Given a subgraph $S$ of triangulation $T$, as illustrated in Figure~\ref{fig:dual_subforest}, let $\widehat{S}$ be the ``dual'' subgraph of $\widehat{T}$ with the same vertex set as $\widehat{T}$ and with edge set 
\[
E(\widehat{S}) \,:=\; \{\hat{e}: e \in E(S)\}.
\]
A node of $\widehat{S}$ is a \defn{leaf} if it has degree $1$. 
For each node $v$ of $\widehat{S}$, let $g_S(v)$ and $d_S(v)$ be defined as follows. 
Write $r$ for the root (the unique node of out-degree $0$) of the tree component of $\widehat{S}$ containing $v$. Then $g_S(v)$ and $d_S(v)$ are the number of left- and right-turns on the path from $v$ to $r$, respectively; see Figure~\ref{fig:lr_labeling}.  (Figure~\ref{fig:reduced_dualtree} is used later in the section.)\

\begin{figure}[b]
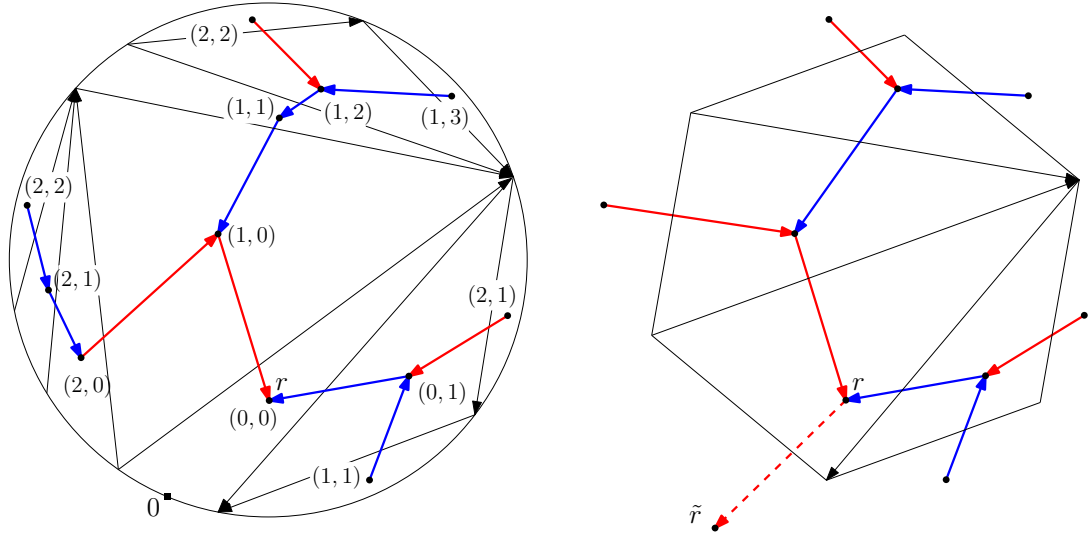

\begin{subfigure}[b]{0.48\textwidth}
\begin{centering}
\includegraphics[width=\textwidth,page=3]{dual_tree.pdf}
\caption{The left-turn and right-turn labelling of a component $\hat{H}$ of $\widehat{S}$ with root node $r$. Labels are given in the form $(g(v),d(v))$ for all nodes $v$ of $\widehat{S}$.}
              \label{fig:lr_labeling}
\end{centering}
\end{subfigure}%
\quad
\begin{subfigure}[b]{0.48\textwidth}
\begin{centering}
\includegraphics[width=\textwidth,page=4]{dual_tree.pdf}
\caption{The reduced tree $\tilde{H}$ corresponding to the component $\hat{H}$, together with the triangulation of a polygon to which $\tilde{H}$ is dual. The root edge of $\tilde{H}$ is dashed.}
              \label{fig:reduced_dualtree}
\end{centering}
\end{subfigure}%
\caption{In both subfigures, left-turn edges are red and right-turn edges are blue.}
\end{figure}

%\rrr{Page 5, Fig.~4: I'm not sure I understand this figure: for instance the red arc on the bottom-right of Fig.~4(a) starts from a vertex labelled (2,1) and ends in a vertex labelled (0,1), suggesting that both g and d decrease along this arc, which seems incompatible with the definitions. I'm possibly missing something here, but in any case, the figure needs to be explained more clearly in the text. ... Also, could you write the vertex labels in that figure?} \comment{DW: in Figure~4(A), the vertex on the right labelled (2,1) should be labelled (1,1), and the vertex at the bottom labelled (1,1) should be labelled (0,2). }

Recall that $T'$ is obtained from $T$ by flipping edge $AC$ within quadrilateral $ABCD$. For each $i\in\{1,2,\dots,\lceil \log_\alpha n \rceil\}$, let $S_i$ be the subgraph of $T$ with edge set 
$E(S_i) = \{e \in E(T): \sigma_e=i\}$, and let $S_i'$ be the subgraph of $T'$ with edge set 
$E(S_i') = \{e \in E(T'): \sigma_e=i\}$. Define $\widehat{S}_i$ and $\widehat{S}'_i$ as in the preceding paragraph (so $\widehat{S}_i$ is a subgraph of $\widehat{T}$ and $\widehat{S}_i'$ is a subgraph of $\widehat{T}'$). Let 
\[
G(T) := \sum_{i=1}^{\lceil \log_\alpha n \rceil} \sum_{v \in V(\widehat{S}_i)}
g_{S_i}(v) 
\quad \mbox{ and }\quad 
D(T) := \sum_{i=1}^{\lceil \log_\alpha n \rceil} \sum_{v \in V(\widehat{S}_i)} 
d_{S_i}(v)\, .
\]

The following proposition implies that in cases (a), (b) and (c), flipping edge $AC$ yields a change in parity of at least one of $G$ and $D$. 
\begin{prop}\label{prop_parity_cases123}
If the scales of the edges $AB$, $AC$, $AD$, $BC$, $BD$ and $CD$ are as in cases (a), (b) or (c) above, then $G(T')=G(T)-1$ or $D(T')=D(T)+1$ (or both).
\end{prop}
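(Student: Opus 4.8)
The plan is to show that flipping $AC$ changes the forests $\hat{S}_i$ only at the single scale $s:=\sigma_{AC}=\sigma_{BD}=\sigma_{BC}=\sigma_{AD}$ common to cases (1)--(3), and then to read off the change in $G$ and $D$ from the few dual edges near the quadrilateral whose contribution is altered. To dispose of the scales $i\neq s$: the flip replaces the single edge $AC$ by $BD$ and leaves every other edge in place, and $\sigma_{AC}=\sigma_{BD}=s$, so $E(S_i)=E(S_i')$ for all $i\neq s$; the only effect there is that the triangles $ABC,ACD$ become $ABD,BCD$, changing an endpoint of any scale-$i$ dual edge incident to them. In cases (1)--(3) the only quadrilateral sides of scale $<s$ are $AB$ (in cases (1),(3)) and $CD$ (in cases (1),(2)); for such a side, say $AB$, the incident triangle ($ABC$ in $T$, $ABD$ in $T'$) has its two other sides at scale $s$, so it is the root of its component in $\hat{S}_{\sigma_{AB}}$, while $\widehat{AB}$ keeps its turn type (the apex of its head triangle is $C$ in $T$ and $D$ in $T'$, both above $B$). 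Hence $\hat{S}_i$ and $\hat{S}_i'$ are isomorphic by relabelling that one root-leaf and contribute equally to $G$ and $D$, so $G(T')-G(T)$ and $D(T')-D(T)$ are governed entirely by $\hat{S}_s$.

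Next I would fix the local picture in $\hat{S}_s$. Each triangle points to its parent across its longest side (the side of largest label-span), so in $T$ the parent of $ABC$ is $ACD$ across $AC$, the parent of $ACD$ is the outer triangle $U$ on $AD$, and below them hang the subtrees $Q$ (across $BC$, always present), $P$ (across $AB$, only in case (2)) and $R$ (across $CD$, only in case (3)); in $T'$ the same $Q,P,R$ hang below, but now $BCD$ has parent $ABD$ across $BD$ and $ABD$ has parent $U$ across $AD$, with $Q,R$ below $BCD$ and $P$ below $ABD$. Writing $n_Q,n_P,n_R$ for the sizes of $Q,P,R$ (with $n_P=0$ off case (2) and $n_R=0$ off case (3)), I would verify three facts: (i) $\widehat{AD}$ keeps its turn type, and the number of nodes below it equals $2+n_Q+n_P+n_R$ in both $T$ and $T'$; (ii) $\widehat{BC}$ reverses its turn type, since the apex of its head triangle changes from $A$ (below $B$) to $D$ (above $C$); and (iii) the diagonal likewise reverses, the apex of its head triangle being $D$ for $\widehat{AC}$ and $A$ for $\widehat{BD}$.

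I would then compute from $G(T)=\sum_i\sum_{\hat{e}}[\hat{e}\text{ a left turn}]\,\beta(\hat{e})$, and the analogous formula for $D$, where $\beta(\hat{e})$ is the number of nodes below $\hat{e}$. By fact (i) and the invariance of the subtrees $P,Q,R$, every dual edge other than $\widehat{BC}$ and the diagonal contributes identically to $T$ and $T'$, so only these two matter. Now $\widehat{BC}$ has $\beta=n_Q$ in both triangulations, while the diagonal has $\beta=1+n_Q+n_P$ as $\widehat{AC}$ and $\beta=1+n_Q+n_R$ as $\widehat{BD}$; each is counted towards $G$ or $D$ according to its turn type, which reverses by (ii),(iii). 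Carrying out this bookkeeping gives $G(T')-G(T)=-(1+n_P)$ and $D(T')-D(T)=1+n_R$. Since $n_P=0$ except in case (2) and $n_R=0$ except in case (3), this yields $G(T')=G(T)-1$ in cases (1) and (3) and $D(T')=D(T)+1$ in cases (1) and (2); in every case at least one alternative holds, which is the assertion.

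I expect the crux to be the orientation bookkeeping behind facts (i)--(iii): one must combine the left-to-right rule defining $\hat{T}$ with the rooting at $\hat\rho$ to determine precisely which triangle is the head of each affected dual edge, hence its turn type, and in particular to fix the sign of each difference correctly — this is where the handedness of the embedding must be tracked with care. A secondary point is the treatment of degenerate positions, such as when $AD$ is the root edge $\rho$ (so $\widehat{AD}$ carries no turn and $U$ is absent) or when some of $P,Q,R$ are boundary nodes; I would handle these as easy special cases of the same analysis.
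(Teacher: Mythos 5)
Your proposal is correct, and its core computation takes a genuinely different route from the paper's, even though the overall skeleton is the same. Your first step --- disposing of scales $i\neq\sigma$ by noting that the only affected dual edges ($\hat{e}_{AB}$ and $\hat{e}_{CD}$, at their smaller scales) keep their turn type and remain directed into the root of their component, so that $\hat{S}_i$ and $\hat{S}_i'$ agree up to relabelling that root node --- is essentially identical to the paper's first claim. The difference is in the critical scale $\sigma$: the paper works node by node, reading the pairs $(g(v),d(v))$ near the quadrilateral off three separate figures (one per case), checking that nodes outside $ABCD$ keep their labels, and computing the change contributed by the two quadrilateral nodes in each case separately. You instead exchange the order of summation, writing $G$ and $D$ as sums over dual edges of the subtree size $\beta(\hat{e})$ weighted by the turn-type indicator, and observe that only $\hat{e}_{BC}$ and the flipped diagonal change their contribution (both reverse turn type, with $\beta$ equal to $n_Q$ for $\hat{e}_{BC}$, and $1+n_Q+n_P$ versus $1+n_Q+n_R$ for the diagonal). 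This single computation covers cases (1)--(3) uniformly, eliminates the figure inspection, and yields exact increments $G(T')-G(T)=-(1+n_P)$ and $D(T')-D(T)=1+n_R$, which carry strictly more information than the paper's per-case conclusions (the paper asserts only the $D$-change in case (2) and only the $G$-change in case (3), and both changes only in case (1)); your formulas specialize to exactly those statements. Two caveats, neither fatal. First, the signs in your formulas rest on the handedness bookkeeping you flag as the crux: which of $\hat{e}_{BC}$, $\hat{e}_{AC}$, $\hat{e}_{BD}$ is a left turn and which a right turn. Your signs agree with the proposition as stated (and evidently with the paper's figures), but note that a fully literal application of the paper's textual conventions (clockwise labels, dual edge oriented from the left to the right of the primal edge, turn type read from the apex of the head triangle) produces the mirror image, i.e.\ $G(T')-G(T)=1+n_R$ and $D(T')-D(T)=-(1+n_P)$; this mirror ambiguity is harmless for Theorem~\ref{thm:main}, since only the parities of $G$ and $D$ are used, but it does mean your facts (ii)--(iii) should be pinned to an explicit orientation convention rather than left implicit. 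Second, the degenerate situation $AD=\rho$ (where $\hat{e}_{AD}$ carries no turn) is deferred in your write-up, but as you say it is immediate --- its contribution is zero on both sides --- and the paper's own proof does not treat it explicitly either.
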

\begin{proof}
Let $\sigma=\sigma_{AC}$.
We first claim that for all $i \ne \sigma$, the contributions to $G(T)$ and to $D(T)$ from scale-$i$ nodes are unchanged by the edge flip operation; that is, 
\begin{equation}\label{eq:parity123}
\sum_{i \ne \sigma}\!\sum_{v \in V(\widehat{S}_i)}\!
g_{S_i}(v)=\sum_{i \ne \sigma}\!\sum_{v \in V(\widehat{S}_i')}\!
g_{S_i'}(v)
\quad \mbox{ and }\quad 
\sum_{i \ne \sigma}\!\sum_{v \in V(\widehat{S}_i)} \!
d_{S_i}(v)=\sum_{i \ne \sigma}\! \sum_{v \in V(\widehat{S}_i')} \!
d_{S_i'}(v)\, .
\end{equation}
We prove these equalities in case (a); the other two cases are similar but easier. 

The triangle containing the head of the edge $\hat{e}_{AB}$ dual to $AB$ is $ABC$ in $T$ and is $ABD$ in $T'$. The case (a) assumptions on the scales of the edges then imply that the head of $\hat{e}_{AB}$ has out-degree $0$ and in-degree $1$ in $\widehat{S}_{\sigma_{AB}}$. In particular, it is the root of its component of $\widehat{S}_{\sigma_{AB}}$. Moreover, $\hat{e}_{AB}$ is a right-turn edge in both $T$ and $T'$, since $C$ and $D$ are both larger than $A$ and $B$. 

%\rrr{Page 6, in the proof of Proposition 5, line 25: when you write that "$\hat{e}_{AB}$ is a right-turn edge" you actually mean that it is a right-turn edge in both T AND T', right? (Since you tell that BOTH C and D are larger than A and B.)}

%\rrr{Page 6, in the proof of Proposition 5, lines 26-28: when you write that "the triangle containing the tail of the edge $\hat{e}_{CD}$ dual to CD is ACD in T and is BCD in T'", shouldn't this be "the head of $\hat{e}_{CD}$" instead of its tail? Also, in the assertion "$\hat{e}_{CD}$ has in-degree 1 and out-degree 0 within $\widehat{S} \sigma_{CD}$", shouldn't it be "the head of $\hat{e}_{CD}$" instead of just "$\hat{e}_{CD}$"?}

Similarly, the triangle containing the head of the edge $\hat{e}_{CD}$ dual to $CD$ is $ACD$ in $T$ and is $BCD$ in $T'$. The assumptions on the scales of edges again imply that the head of $\hat{e}_{CD}$ has in-degree $1$ and out-degree $0$ within $\widehat{S}_{\sigma_{CD}}$, so is the root of its component of $\widehat{S}_{\sigma_{CD}}$. Moreover, $\hat{e}_{CD}$ is a left-turn edge  in both $T$ and $T'$, since $A$ and $B$ are both smaller than $C$ and $D$. 

%\rrr{Page 6, in the proof of Proposition 5, line 29 (or 30): "$\hat{e}_{AB}$ is a left-turn edge" should be replaced by "$\hat{e}_{CD}$ is a left-turn edge", right? In addition, please tell that it is a left-turn edge in both T and T', if that's what you mean (because you write "A and B are both smaller than C and D".)}

Since the structures of $T$ and of $T'$ are unaffected outside of the quadrilateral $ABCD$, the equalities in (\ref{eq:parity123}) follow in case (a). 

We now restrict our attention to the scale $\sigma$. We write $g(\cdot) = g_{S_\sigma}(\cdot)$ and $d(\cdot) = d_{S_\sigma}(\cdot)$, 
and likewise $g'(\cdot) = g_{S'_\sigma}(\cdot)$ and $d'(\cdot) = d_{S'_\sigma}(\cdot)$. Note that all nodes not lying within the quadrilateral $ABCD$ either belong to both $S_\sigma$ and $S'_{\sigma}$ or belong to neither of $S_\sigma$ and $S'_{\sigma}$. 

The remainder of the proof boils down to inspection of Figures~\ref{fig:lr_labels_1},~\ref{fig:lr_labels_2} and~\ref{fig:lr_labels_3}. 
In case (a), observe (see Figure~\ref{fig:lr_labels_1}) that $g(u)=g'(u)=a+1$ and $d(u)=d'(u)=b+1$, which implies that $(g(q),d(q))=(g'(q),d'(q))$ for all nodes $q$ not lying within $ABCD$. Since $g(v)+g(x)=2a+1=g(p)+g(z)+1$ and $d(v)+d(z)=2b=d(p)+d(z)-1$, it follows that $G(T)=G(T')+1$ and $D(T')=D(T)-1$. 

%\rrr{Page 6, in the proof of Proposition 5, line 41: shouldn't it be "$d(w)=d'(w)$" instead of "$d(y)=d'(y)$"? In particular, there is no vertex $y$ in Fig.~6.}

Figure~\ref{fig:lr_labels_2} depicts the situation in case (b). In this case $d(u)=d'(u)$ and $d(w)=d'(w)$, which implies that $d(q)=d'(q)$ for all $q$ not lying in $ABCD$. 
Since $d'(z)+d'(p)=d(v)+d(x)+1$, it follows that $D(T')=D(T)+1$.

Finally, Figure~\ref{fig:lr_labels_3} relates to case (c). In this case $g(u)=g'(u)$, $g(y)=g'(y)$, and 
$g(v)+g(x)=g'(z)+g(p)+1$, so the above argument implies that $G(T')=G(T)-1$. 
%\rrr{Page 6, in the proof of Proposition 5, line 45 (or 44): shouldn't it be "g(y)=g'(y)" instead of "g(y)=g'(t)"?} 
\end{proof}

\begin{figure}[ht]
\begin{centering}
\includegraphics[width=0.85\textwidth,page=2]{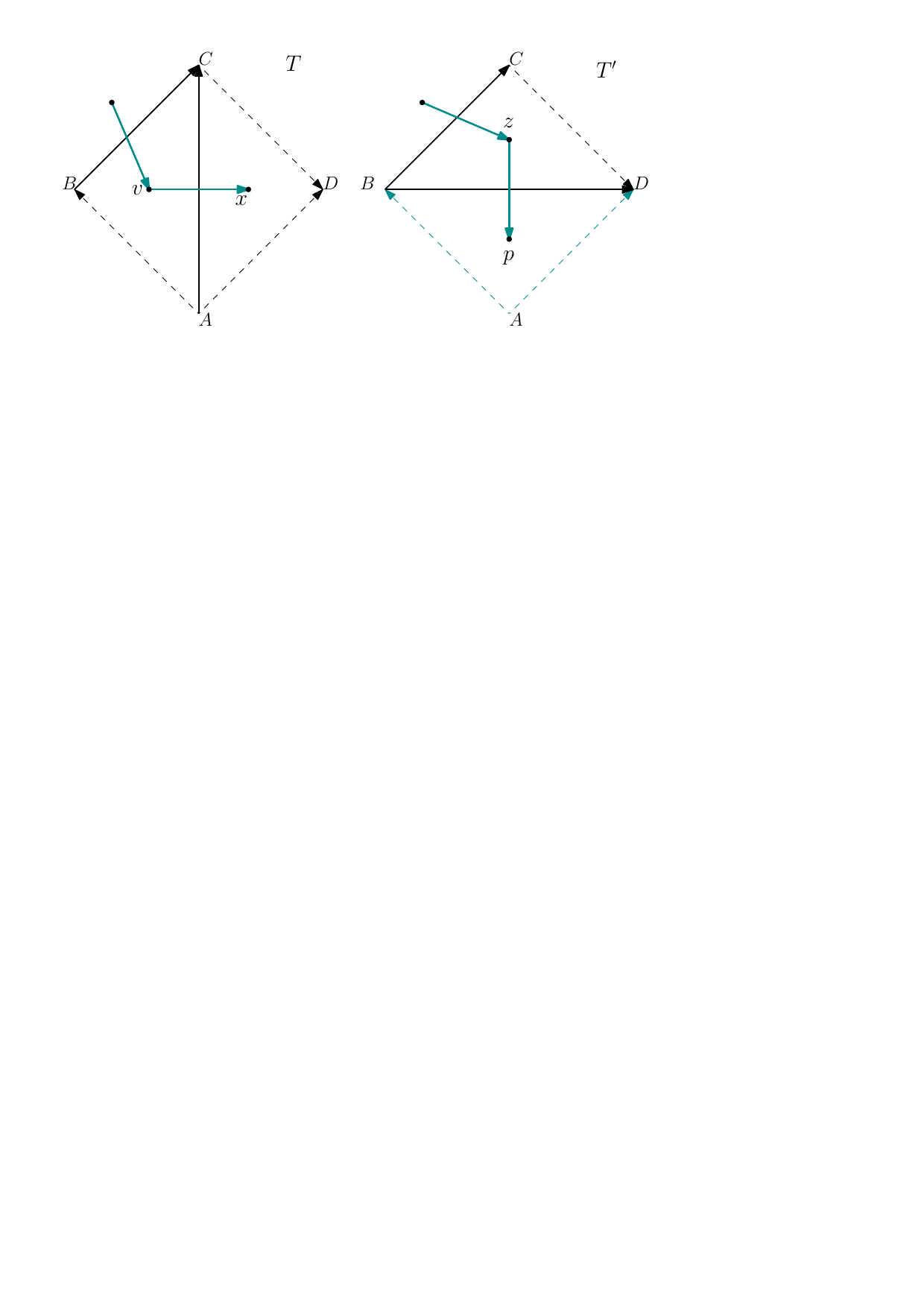}
\caption{The left-turn and right-turn labels near quadrilateral $ABCD$ in $T$ and $T'$: case (a). Here $(g(x),d(x))=(a,b)$, $(g(v),d(v))=(a+1,b)$ and $(g(u),d(u))=(a+1,b+1)$.}
             \label{fig:lr_labels_1}
\end{centering}
\end{figure} 
\begin{figure}[ht] 
\begin{centering} 
\includegraphics[width=0.85\textwidth,page=3]{5_cases} 
\caption{The left-turn and right-turn labels near $ABCD$ in $T$ and $T'$: case (b).} 
              \label{fig:lr_labels_2} 
\end{centering} 
\end{figure} 
\begin{figure}[ht] 
\begin{centering} 
\includegraphics[width=0.85\textwidth,page=4]{5_cases} 
\caption{The left-turn and right-turn labels near $ABCD$ in $T$ and $T'$: case (c).}
              \label{fig:lr_labels_3} 
\end{centering} 
\end{figure} 

%\rrr{Page 6, line 50: "a rooted binary subtree" would sound better than "a rooted sub-binary tree".}

%\rrr{Page 6, paragraph above Proposition 6: how do we know that $\tilde{H}$ remains connected after the nodes of degree two have been removed? More precisely, is it possible that some vertex of $\hat{H}$ of degree 3 is separated from $\tilde{r}$ by degree 2 nodes? (Which would then disconnect $\tilde{H}$.) Could you write a short argument (just one sentence would be enough) to explain that? In fact, the explanation of how $\tilde{H}$ is built from $\hat{H}$ may need to be rewritten in a clearer way (you also remove the leaves of $\hat{H}$, whose degree is not two, but one).}\comment{DW: ignore}

We now turn our attention to cases (d) and (e). 
Consider any subgraph $S$ of $T$, and let $\hat{H}$ be a connected component of $\widehat{S}$. Note that $\hat{H}$ is a rooted sub-binary tree (that is, every node has degree at most three; see Figure~\ref{fig:dual_subforest}). Let $\tilde{H}$ be the tree obtained from $\hat{H}$ as follows (see Figure~\ref{fig:lr_labeling} and~\ref{fig:reduced_dualtree}). First, if the root $r$ of $\hat{H}$ has exactly two children then add a new node $\tilde{r}$ incident only to $r$ and reroot at $\tilde{r}$. Next, suppress each node of degree exactly two (that is, contract one edge incident to the node). We obtain a rooted binary tree $\tilde{H}$ called the \defn{reduced tree} of $\hat{H}$. 

%\rrr{Page 6, paragraph above Proposition 6: this paragraph explains why you drew Fig. 4 as you did. So in relation with my above comment 7), I suggest that you make two figures instead of Fig. 4. The first should describe the functions g and d, be placed in Page 5, and referred to there. In this figure, you should use the same triangulation as in Fig. 3 (and actually, in Fig.~3 itself, you may use only one triangulation instead of different triangulations for (a) and (b)) and provide the vertex labels. The other figure should explain the construction of H~ and be placed in Page 6. In this other figure, which should look like the current Fig.~4, please make sure that the relation between (a) and (b) is clear (for example, vertex labels would help.)} \comment{DW: added a comment}

\begin{prop}\label{prop:leaf_count}
For each $i\in\{1,2,\dots,\lceil \log_\alpha n \rceil\}$ and for each component $\hat{H}$ of $\widehat{S}_i$, the reduced tree $\tilde{H}$ has at most $2\alpha-1$ leaves.
\end{prop}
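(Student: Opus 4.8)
The plan is to realise each component $\hat H$ of $\hat S_i$ through a nesting/length argument on the intervals spanned by its triangles. First I would record the basic geometry: the nodes of $\hat H$ are exactly the triangles of $T$ that are glued together along scale-$i$ edges, so the edges of $\hat H$ are the scale-$i$ diagonals interior to the region $R$ spanned by these triangles, while every boundary edge of $R$ has scale $\neq i$ (otherwise the triangle across it would also lie in $\hat H$). For a triangle $XYZ$ with $X<Y<Z$ the additive identity $Z-X=(Y-X)+(Z-Y)$ shows that $XZ$ is the unique longest edge; moreover, in the dual tree rooted at $\hat\rho$ the edge of any triangle facing the root is precisely this long edge $XZ$, so the parent of a node is reached through $XZ$ and its (at most two) children through the short edges $XY$ and $YZ$. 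Associating to each node $\tau=XYZ$ its interval $I(\tau)=[X,Z]$, the children of $\tau$ then carry the interior-disjoint sub-intervals $[X,Y]$ and $[Y,Z]$, and a child is present in $\hat H$ exactly when the corresponding short edge has scale $i$, i.e.\ when its length exceeds $r^{i-1}$.

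Next I would reduce the count to ears. Suppressing degree-two nodes leaves the set of degree-one nodes unchanged, so the leaves of $\tilde H$ are the leaves of $\hat H$ together with the at most one extra leaf $\tilde r$ created when the root $\tau_0$ of $\hat H$ has two children. A leaf of $\hat H$ other than $\tau_0$ is a triangle both of whose short edges have scale $<i$; its long edge then has scale $i$, so its interval has length $>r^{i-1}$. The root $\tau_0=X_0Y_0Z_0$ is the special case: its long edge $X_0Z_0$ is the boundary edge leading out of the component, hence has scale $\neq i$, and since $R$ contains at least one scale-$i$ diagonal this edge must in fact be \emph{long}, of scale $>i$. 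Consequently $\tau_0$ has one or two children whose intervals $[X_0,Y_0]$ and $[Y_0,Z_0]$ each have length at most $r^i$.

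The heart of the argument is the following estimate, applied to each subtree below $\tau_0$. Fix a child of $\tau_0$ and let $\mathcal T$ be the subtree it roots; its root interval $J$ satisfies $|J|\le r^i$. The leaves of $\mathcal T$ are pairwise incomparable nodes, so their intervals are pairwise interior-disjoint and contained in $J$, and each such interval has length $>r^{i-1}$. Hence the number of leaves of $\mathcal T$ is strictly less than $|J|/r^{i-1}\le r$, so at most $r-1$ since $r$ is an integer. Summing over the at most two subtrees below $\tau_0$ bounds the number of ears by $2(r-1)$, and adding the possible extra leaf $\tilde r$ yields at most $2r-1$ leaves of $\tilde H$; the degenerate cases in which $\tau_0$ has zero or one child only decrease this count.

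I expect the main obstacle to be the bookkeeping at the root $\tau_0$: one must verify that its outward (long) edge really has scale larger than $i$ whenever the component is non-trivial, so that both surviving child intervals have length at most $r^i$, and one must correctly account for the single extra leaf introduced by the re-rooting step in the definition of $\tilde H$. Once the picture ``parent $=$ long edge, children $=$ short edges'' is in place, the disjointness-of-intervals estimate itself is routine.
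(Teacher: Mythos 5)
Your proof is correct and follows essentially the same route as the paper's: both arguments bound the number of in-degree-zero nodes in each subtree below the component's root by noting that their dual scale-$i$ edges span pairwise interior-disjoint intervals of length greater than $r^{i-1}$, all nested inside the root-child edge's interval of length at most $r^i$, giving at most $r-1$ such nodes per subtree and hence at most $2(r-1)+1$ leaves after accounting for the root or the added node $\tilde{r}$. The only (harmless) discrepancy is that you assume the component's root is a triangle $X_0Y_0Z_0$ whose long edge leaves the component, which overlooks the one case where the root is instead the outer face node at the head of $\hat{\rho}$ (this occurs when the boundary edge $(0,n-1)$ itself has scale $i$); your interval argument applies verbatim there with $J=[0,n-1]$, just as the paper's phrasing covers it uniformly.
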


\begin{proof}
Fix any node $u$ of $\tilde{H}$ with in-degree zero, and consider the edge $uv$ incident to $u$ in $\hat{H}$. Then $uv$ is dual to an edge $AB$ with $\sigma_{AB}=i$ so with $\alpha^{i-1} < B-A \le \alpha^i$. Now fix another node $w$ of $\tilde{H}$ with in-degree zero, write $wx$ for the edge incident to $w$ in $\hat{H}$, and let $CD$ be its dual edge. Then necessarily either $A < B \leq C < D$ or $C < D \leq B < A$. 

Consider an oriented edge $yr$ where $r$ is the root of $\hat{H}$. Writing $EF$ for the edge dual to $yr$, then the observation of the preceding paragraph implies that $F-E > \alpha^{i-1} \cdot \ell$, where $\ell$ is the number of nodes with in-degree zero in the subtree rooted at $y$. On the other hand, $\sigma_{EF}=i$ so $F-E \le \alpha^i$; so $\ell \le \alpha-1$. If $r$ has only one child (so is a leaf itself) this yields that $\tilde{H}$ has at most $\alpha$ leaves. If $r$ has two children then each of their subtrees contains at most $\alpha-1$ leaves; in this case $\tilde{r}$ is also a leaf, so the total number of leaves is at most $2(\alpha-1)+1$. 
\end{proof}

For any subgraph $S$ of the triangulation $T$, the embedding of $\widehat{T}$ in the plane induces a total order of the connected components of $\widehat{S}$, given by the order their roots are visited by a clockwise tour around the contour of $\widehat{T}$ starting from the head of the root edge $\rho$. For each $1 \le i \le \lceil \log_\alpha n \rceil$, list the components of $\widehat{S}_i$ in the order just described as $H_{i,1},\ldots,H_{i,\ell}$, where $\ell=\ell(T,i)$ is the number of such components. Then, for $1 \le j \le \ell(T,i)$ let $\tilde{H}_{i,j}$ be the reduced tree of $H_{i,j}$. 

Each tree from $(\tilde{H}_{i,j},i \le \lceil \log_\alpha n\rceil,j \le \ell(T,i))$ is a dual to a unique triangulation $\tilde{T}_{i,j}$ of a polygon, as in Figure~\ref{fig:reduced_dualtree}.  Proposition~\ref{prop:leaf_count} implies that $\tilde{T}_{i,j}$ belongs to an associahedron $\cA_k$ for some $k \le 2\alpha-1$. Let $\phi$ be a proper colouring of the disjoint union of $(\cA_k,k \le 2\alpha-1)$, with colours $\{1,\ldots,\chi(\cA_{2\alpha-1})\}$, and define 
\[
I(T) := \left(\sum_{i=0}^{\lceil \log_\alpha n \rceil} \sum_{j=1}^{\ell(T,i)} \phi(\tilde{T}_{i,j}) \right)\mod \chi(\cA_{2\alpha-1})\, .
\]

\begin{prop}\label{prop_induction}
In cases (d) and (e), we have $I(T) \ne I(T')$. 
\end{prop}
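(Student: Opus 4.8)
The plan is to reduce, in both cases (4) and (5), to a single edge flip taking place inside exactly one of the reduced triangulations $\tilde T_{\sigma,j}$, where $\sigma=\sigma_{AC}=\sigma_{BD}$, and then to invoke properness of $\phi$. Concretely, I would show that all summands of $\mathrm I$ coming from scales $i\ne\sigma$ are unchanged by the flip, that the flip alters exactly one scale-$\sigma$ component, and that it does so by turning $\tilde T_{\sigma,j}$ into an \emph{adjacent} triangulation of the same polygon; properness of $\phi$ then forces the corresponding summand to change by a non-zero amount modulo $\chi(\cA_{2r-1})$.

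First I would check that the flip changes $S_i$ only for $i=\sigma$. The only edges distinguishing $T$ from $T'$ are $AC$ and $BD$, both of scale $\sigma$, so for $i\ne\sigma$ the edge sets of $S_i$ and $S_i'$ coincide. The one delicate point is the side $AD$, which in case (4) has larger scale $\tau$ and is incident to $ACD$ in $T$ but to $ABD$ in $T'$. However, within $\hat S_\tau$ this node carries only the single edge $\hat e_{AD}$, so it is a leaf of its component; relabelling a leaf does not change the reduced tree, so $\tilde T_{\tau,\cdot}$ is identical for $T$ and $T'$. Hence every summand of $\mathrm I$ with $i\ne\sigma$ is the same for $T$ and $T'$.

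Next I would locate the flip inside $\hat S_\sigma$. The orientation convention makes both $\hat e_{AC}$ and $\hat e_{CD}$ point into $ACD$ (their heads are $ACD$, with apices $D$ and $A$ lying outside the relevant intervals), and symmetrically both $\hat e_{BD}$ and $\hat e_{AB}$ point into $ABD$ in $T'$. In case (5) every edge at each of $ABC,ACD$ (and $ABD,BCD$) lies in $S_\sigma$, so these are degree-three nodes, $\hat e_{AC}$ is an interior edge, and its flip is an ordinary rotation that fixes the component root. In case (4) the unique out-edge $\hat e_{AD}$ of $ACD$ has larger scale and is absent from $\hat S_\sigma$, so $ACD$ is the out-degree-zero node, hence the root of its component; after adjoining $\tilde r$ and suppressing degree-two nodes it survives as a branching node, and likewise for $ABD$ in $T'$. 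Writing $\alpha,\beta,\gamma$ for the (unchanged) subtrees reached through $AB,BC,CD$, I would read off from the child relations that the reduced tree of $T$ realises the bracketing $((\alpha,\beta),\gamma)$ while that of $T'$ realises its rotation $(\alpha,(\beta,\gamma))$—precisely the triangulations obtained from one another by flipping the diagonal $AC\leftrightarrow BD$.

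In either case $\tilde T_{\sigma,j}$ and $\tilde T_{\sigma,j}'$ thus differ by a single flip, so they are adjacent in $\cA_k$ for some $k\le 2r-1$ by Proposition~\ref{prop:leaf_count}; the remaining scale-$\sigma$ components, and the contour order of all roots, are unaffected (the root merely migrates from $ACD$ to $ABD$, both sitting at the tail of the fixed edge $\hat e_{AD}$). Therefore $\mathrm I(T)-\mathrm I(T')\equiv \phi(\tilde T_{\sigma,j})-\phi(\tilde T_{\sigma,j}')\pmod{\chi(\cA_{2r-1})}$, and since $\phi$ is proper these are distinct values in $\{1,\dots,\chi(\cA_{2r-1})\}$, so the difference is non-zero modulo $\chi(\cA_{2r-1})$. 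The main obstacle is the case-(4) bookkeeping: one must verify that deleting the larger-scale side $AD$ promotes $ACD$ (resp.\ $ABD$) to the component root, so that the reduction does not simply suppress this degree-two node and collapse the rotation—this is exactly what makes the flip visible in $\tilde T_{\sigma,j}$ rather than leaving the reduced triangulation invariant.
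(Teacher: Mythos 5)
Your proof is correct and takes essentially the same approach as the paper: both arguments reduce the flip to a single edge flip inside the one affected reduced triangulation $\tilde T_{\sigma,j}$ — hinging on the same key observation that in case (4) the absence of the larger-scale edge $AD$ makes $ACD$ (resp.\ $ABD$) the root of its scale-$\sigma$ component, so the re-rooting step in the reduction prevents that degree-two node from being suppressed — and then invoke properness of $\phi$. The only difference is bookkeeping: the paper matches components of $\hat S_\sigma$ and $\hat S'_\sigma$ via contour explorations of $\hat T$ and $\hat T'$, whereas you argue directly from the local dual structure and dispose of the scale-$\sigma_{AD}$ component in case (4) by a leaf-relabelling observation, which is if anything slightly more explicit than the paper's bare assertion that all other components are unchanged.
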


% \rrr{Page 7, figures 5-7: you should use the red/blue coloring in this figure as you did in Fig.~4.}\comment{DW: Louigi to decide}

\begin{proof}
Write $vx=\hat{e}$ for the dual edge of $AC$ in $T$, and $zp=\hat{e}_{BD}$ for the dual edge of $BD$ in $T'$. Figures~\ref{fig:case_4} and~\ref{fig:case_5} illustrate cases (d) and (e) respectively. 

\begin{figure}[ht]	
	\begin{centering}
		\includegraphics[width=0.85\textwidth,page=5]{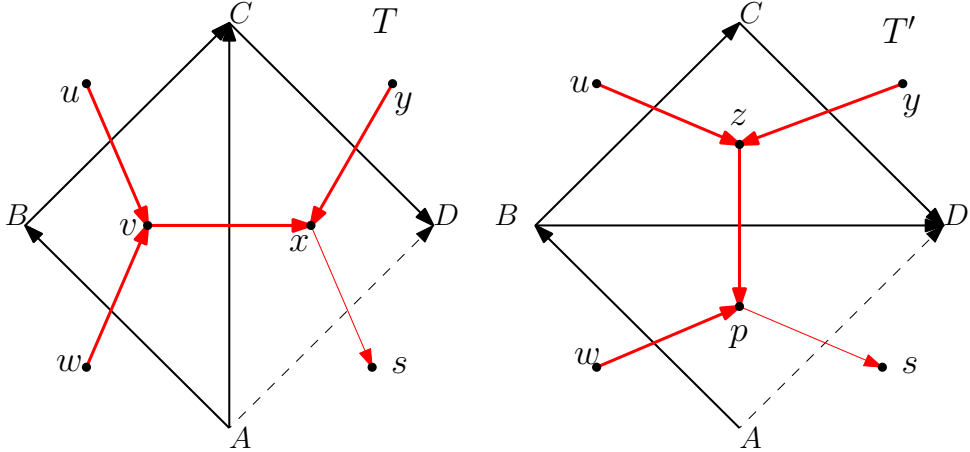}
		\caption{The structure near the quadrilateral $ABCD$ in $T$ and $T'$ case (d). The dashed edges have scale $\sigma_{AD} > \sigma$, all other edges of the triangulations shown in the figure have scale $\sigma$.}
		\label{fig:case_4}
	\end{centering}
\end{figure} 
\begin{figure}[ht]
	\begin{centering}
		\includegraphics[width=0.85\textwidth,page=6]{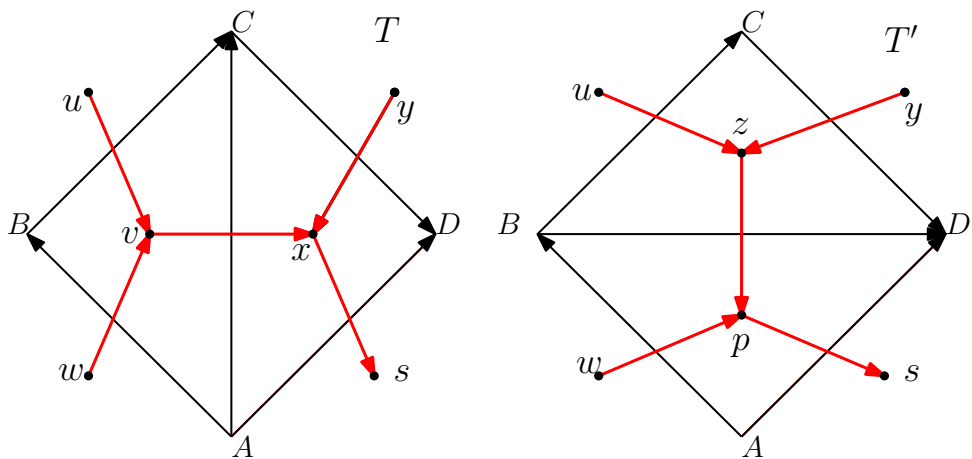}
		\caption{The structure near the quadrilateral $ABCD$ in $T$ and $T'$ in case (5). All edges of the triangulations shown in the figure have scale $\sigma$.}
		\label{fig:case_5}
	\end{centering}
\end{figure}

The clockwise contour exploration of a rooted plane tree is a walk around the outside of the tree which starts and finishes at the root, keeping the unbounded face to its left at all times. This walk traverses each edge exactly twice, and records the vertices it visits in sequence, with repetition. 
In cases (d) and (e), for the clockwise contour explorations of $\widehat{T}$ and of $\widehat{T}'$, there are (possibly empty) strings $P_1,\ldots,P_5$ so that the sequences recorded by the contour explorations of $\widehat{T}$ and of $\widehat{T}'$ are respectively of the form 
\[
P_1sxvwP_2wvuP_3uvxyP_4yxsP_5 \quad \text{and} \quad 
P_1spwP_2wpzuP_3uzyP_4yzpsP_5\,;
\]
see Figures~\ref{fig:case_4} and~\ref{fig:case_5}.

The contour explorations of $\widehat{T}$ and $\widehat{T}'$ agree until they visit dual vertices lying within $ABCD$. It follows that if $H_{\sigma,j}$ is the component of $S_\sigma$ containing $\hat{e}_{AC}$, then the component of $S_{\sigma}'$ containing $\hat{e}_{BD}$ is $H'_{\sigma,j}$. 

In case (d), since $x$ has two children in $H_{\sigma,j}$, by construction it is the unique child of the root of $\tilde{H}_{\sigma,j}$. It is thus natural to identify $s$ with the root of $\tilde{H}_{\sigma,j}$. We may likewise identify $s$ with the root of $\tilde{H}'_{\sigma,j}$, since $p$ has two children in $H'_{\sigma,j}$. After the addition of $s$ as a root, the nodes $v,x,p$ and $z$ all have degree $3$, so none of these nodes are suppressed when constructing $\tilde{H}_{\sigma,j}$ and $\tilde{H}_{\sigma,j}'$ from $H_{\sigma,j}$ and $H_{\sigma,j}'$. 

In case (e), nodes $v,x,p$ and $z$ all have degree $3$, and the edges $xs$ and $ps$ belong to $H_{\sigma,j}$ and $H_{\sigma,j}'$, respectively.

It follows from the two preceding paragraphs that in cases~(d) and~(e), we may view $v$ and $x$ as nodes of both $H_{\sigma,j}$ and $\tilde{H}_{\sigma,j}$, and $p$ and $z$ as nodes of both $H_{\sigma,j}'$ and $\tilde{H}_{\sigma,j}'$. It is then clear that 
flipping the edge $AC$ in the triangulation $T$ corresponds precisely to flipping the corresponding edge in $\tilde{T}_{\sigma,j}$ to form $\tilde{T}'_{\sigma,j}$. 

Since $\tilde{T}_{\sigma,j}$ and $\tilde{T}_{\sigma,j}'$ are related by a single edge flip, and $\phi$ is a proper colouring, it follows that 
$\phi(\tilde{T}_{\sigma,j}) \ne \phi(\tilde{T}_{\sigma,j}')$. Since all other components of $S_\sigma$, and more generally of each $(S_i,1 \le i \le \lceil \log_\alpha n \rceil)$, are unchanged when moving from $T$ to $T'$, the result follows. 
\end{proof}

\begin{proof}[Proof of Theorem~\ref{thm:main}.] 
	Consider the graph $G$ with vertex set $\N^4$ where distinct vertices $(\ell,m,r,z)$ and $(\ell',m',r',z')$ are adjacent if 
	$|\ell'-\ell|+|m'-m|+|r'-r|+|z'-z| \le 4$. 
	This graph has maximum degree at most $320$ (see \citep{A008412}), so is $321$-colourable. (We are not optimizing constants.)\  
	Let $\kappa:\N^4 \to \{1,\ldots,321\}$ be a proper colouring of~$G$. 
	
	For each triangulation $T$ of $\cA_n$, let $g(T) := G(T) \bmod 2$ and $d(T) :=D(T)\bmod 2$; 
	colour $T$ by the 5-tuple 
	\begin{equation}
		\label{TheColouring}
		\psi(T) := \big(\kappa((\ell_T,m_T,r_T,z_T)),\, c(T),\, g(T),\, d(T),\, I(T) \big).
	\end{equation}
	We now show that $\psi$ is a proper colouring of $\cA_n$. 
	
	Consider adjacent vertices $T$ and $T'$ in $\cA_n$. 
	In all situations covered by Proposition~\ref{prop:2_same}, the vectors $(\ell_{T'},m_{T'},r_{T'},z_{T'})$ and $(\ell_T,m_T,r_T,z_T)$ are different, and thus adjacent in $G$, 
	since each of $|\ell_{T'}-\ell_T|$,  $|m_{T'}-m_T|$, $|r_{T'}-r_T|$ and $|z_{T'}-z_T|$ is at most 1. 
	Thus $\kappa((\ell_{T'},m_{T'},r_{T'},z_{T'})) \ne \kappa((\ell_T,m_T,r_T,z_T))$. 
	In the situations covered by Proposition~\ref{prop_two_equal}, we have $c(T) \ne c(T')$. 
	By Proposition~\ref{prop_parity_cases123}, in cases (a)--(c), either $g(T) \ne g(T')$ or $d(T) \ne d(T')$ or both. Finally, in cases (d) and (e), by Proposition~\ref{prop_induction} we have $I(T) \ne I(T')$. 
	
	Thus $\psi(T)\ne \psi(T')$, implying for any integer $\alpha \ge 3$, 
	\[
	\chi(\mathcal{A}_n) \le 321\cdot \lceil 3 \log_\alpha n \rceil \cdot 2 \cdot 2 \cdot \chi(\mathcal{A}_{2\alpha-1}).
	\]
	Taking $\alpha=3$ yields $\chi(\mathcal{A}_{2\alpha-1}) = \chi(\mathcal{A}_5)=3$, since $\mathcal{A}_5$ is a $5$-cycle. 
	It follows that 
	\[
	\chi(\mathcal{A}_{n}) \le 12\cdot 321 \cdot \lceil 3 \log_3 n \rceil \in O(\log n)\, .\qedhere
	\]
\end{proof}

\subsection*{Acknowledgements}
This research was initiated at the 2018 Barbados Workshop on Graph Theory, held at the Bellairs Research Institute of McGill University.

%%%%%%%%%%%%%%%%%%%%%%%%%%%%
% BIBLIOGRAPHY
%%%%%%%%%%%%%%%%%%%%%%%%%%%%

\end{document}